\NeedsTeXFormat{LaTeX2e}
\documentclass[12pt]{amsart}
\usepackage{color}
\usepackage{amsmath}
\usepackage{amsthm}
\usepackage{amssymb}
\usepackage{graphicx}
\usepackage{amsfonts}
\usepackage{mathrsfs}
\usepackage{parskip}
\usepackage{mathdots}
\usepackage{color}

\numberwithin{equation}{section}
\theoremstyle{plain}
\newtheorem{Proposition}[equation]{Proposition}

\newtheorem{Theorem}[equation]{Theorem}
\newtheorem{Lemma}[equation]{Lemma}
\theoremstyle{definition}

\newtheorem{Example}[equation]{Example}
\newtheorem{Remark}[equation]{Remark}
\newtheorem{Question}[equation]{Question}




\def\HH{\mathscr{H}}
\def\MM{\mathscr{M}}
\def\C{\mathbb{C}}

\def\D{\mathbb{D}}
\def\T{\mathbb{T}}

\title{Concrete examples of $\HH(b)$ spaces}

\author[Fricain]{Emmanuel Fricain}
 \address{Laboratoire Paul Painlev\'e, Universit\'e Lille 1, 59 655 Villeneuve d'Ascq C\'edex }
 \email{emmanuel.fricain@math.univ-lille1.fr}

\author[Hartmann]{Andreas Hartmann}
\address{Institut de Math\'ematiques de Bordeaux, Universit\'e Bordeaux 1, 351 cours de la Lib\'eration 33405 Talence C\'edex, France}
\email{Andreas.Hartmann@math.u-bordeaux1.fr}

\author[Ross]{William T. Ross}
	\address{Department of Mathematics and Computer Science, University of Richmond, Richmond, VA 23173, USA}
	\email{wross@richmond.edu}

\thanks{This work was initiated while the first two authors were staying at the University of Richmond. These authors would like to thank that institution for the great hospitality. Work supported by Labex CEMPI (ANR-11-LABX-0007-01).}

\keywords{de Branges-Rovnyak spaces, non-extreme points, kernel functions, corona pairs}

\subjclass[2010]{30J05, 30H10, 46E22}

\begin{document}

\begin{abstract}
In this paper we give an explicit description of de Branges-Rovnyak spaces $\HH(b)$ when $b$ is of the form $q^{r}$, where $q$ is a rational outer function in the closed unit ball of $H^{\infty}$ and $r$ is a positive number.\end{abstract}

\maketitle

\section{Introduction}

The purpose of this paper is to explicitly describe the elements of the de Branges-Rovnyak space $\HH(b)$ for certain $b \in \mathbf{b}(H^{\infty})$. Here $H^{\infty}$ denotes the space of bounded analytic functions on the open unit disk $\D$ normed by $\|f\|_{\infty} := \sup_{z \in \D} |f(z)|$,
and
$\mathbf{b}(H^{\infty}) := \{g \in H^{\infty}: \|g\|_{\infty} \leqslant 1\}$ is the closed unit ball in $H^{\infty}$
and, for $b \in \mathbf{b}(H^{\infty})$, the {\em de Branges-Rovnyak space} $\HH(b)$ is the reproducing kernel Hilbert space of analytic functions on $\D$ whose kernel is 
$$k^{b}_{\lambda}(z) := \frac{1 - \overline{b(\lambda)} b(z)}{1 - \overline{\lambda} z}, \qquad \lambda, z \in \D.$$

Besides possessing a fascinating internal structure \cite{Sa}, $\HH(b)$ spaces play an important role in several aspects of function theory and operator theory, most importantly, in the model theory for many types of contraction operators \cite{MR0244795,MR0215065}. 

Despite the important role $\HH(b)$ spaces play in operator theory, the exact contents of $\HH(b)$ often remain mysterious. What functions belong to $\HH(b)$? Certainly the kernel functions $k^{b}_{\lambda}, \lambda \in \D$, do (and have dense linear span). What else?

In this paper, we give a precise description of the elements of $\HH(b)$ for certain relatively simple $b$, namely positive powers of rational outer functions.  Our description needs the following set up. If $b \in \mathbf{b}(H^{\infty})$ is a non-extreme point of $\mathbf{b}(H^{\infty})$, equivalently, $\log (1 - |b|) \in L^1(\T, m)$
(where $\T := \{\zeta \in \C: |\zeta| = 1\}$ and $m$ Lebesgue measure on $\T$ normalized so that $m(\T) = 1$), then there exists a unique outer function $a \in \mathbf{b}(H^{\infty})$, called the {\em Pythagorean mate} for $b$, such that $a(0) > 0$ and $|a|^2 + |b|^2 = 1$ almost everywhere on $\T$. The pair $(a, b)$ is said to be a {\em Pythagorean pair}. 

Our first observation says that in certain situations $\HH(b^r)$ does not depend on $r>0$.

\begin{Theorem}\label{Thm1}
\hfill
\begin{enumerate}
\item Suppose $b\in \mathbf{b}(H^{\infty})$ is outer. The following are equivalent:
\begin{enumerate}
\item For any $r > 0$ we have $\HH(b^r) = \HH(b)$ as sets. 
\item $\HH(b^2) = \HH(b)$ as sets. 
\item $b \HH(b) \subset \HH((b)$. 
\end{enumerate}
\item If $b$ is non-extreme, i.e., $\log(1 - |b|) \in L^1(\T)$, with Pythagorean mate $a$, then conditions (a), (b), and (c) are equivalent to the condition 
\begin{equation}\label{Thm-CP}
\inf\{|a(z)| + |b(z)|: z \in \D\} > 0.
\end{equation}
\item If $b$ extreme, i.e., $\log (1 - |b|) \not \in L^1(\T)$, then conditions (a), (b), and (c) are equivalent to the condition 
\begin{equation}\label{Thm-b-inv}
\mbox{$b$ is invertible in $H^{\infty}$}. 
\end{equation}
\end{enumerate}
\end{Theorem}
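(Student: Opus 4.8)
The plan is to run everything through one kernel identity together with the standard multiplier criterion for reproducing kernel Hilbert spaces. The identity is
$$k^{b^{2}}_{\lambda}(z)-k^{b}_{\lambda}(z)=\overline{b(\lambda)}\,b(z)\,k^{b}_{\lambda}(z),$$
and more generally $k^{cd}_{\lambda}(z)=k^{d}_{\lambda}(z)+\overline{d(\lambda)}\,d(z)\,k^{c}_{\lambda}(z)$ for $c,d\in\mathbf{b}(H^{\infty})$; the criterion is that $M_{\varphi}\colon\mathscr{H}_{1}\to\mathscr{H}_{2}$ is bounded with norm at most $t$ if and only if $(\lambda,z)\mapsto t^{2}K_{2}(\lambda,z)-\overline{\varphi(\lambda)}\varphi(z)K_{1}(\lambda,z)$ is a positive kernel, the case $\varphi\equiv1$ being the criterion for a contractive inclusion $\mathscr{H}_{1}\subseteq\mathscr{H}_{2}$. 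Several consequences are immediate for any $b\in\mathbf{b}(H^{\infty})$: writing $b^{t}=b^{s}b^{t-s}$ one gets the monotone contractive inclusions $\HH(b^{s})\subseteq\HH(b^{t})$ for $0<s\le t$; the operator $M_{b}\colon\HH(b)\to\HH(b^{2})$ is always a contraction (the kernel to be checked reduces to $k^{b}_{\lambda}$); and $\HH(b^{2})\subseteq\HH(b)$ with bounded inclusion if and only if, for some $N$, the kernel $(N^{2}+1)k^{b}_{\lambda}-k^{b^{2}}_{\lambda}=N^{2}k^{b}_{\lambda}-\overline{b(\lambda)}b(z)k^{b}_{\lambda}(z)$ is positive, which by the criterion is exactly the statement that $M_{b}$ is bounded on $\HH(b)$. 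Together with the open mapping / closed graph theorem this yields $(a)\Rightarrow(b)$ (trivial) and $(b)\Leftrightarrow(c)$; and iterating the identity to $k^{b^{n}}_{\lambda}=\bigl(\sum_{j=0}^{n-1}\overline{b(\lambda)}^{j}b(z)^{j}\bigr)k^{b}_{\lambda}(z)$ and using $\|M_{b^{j}}\|_{\HH(b)\to\HH(b)}\le N^{j}$ shows that $(c)$ forces $\HH(b^{n})=\HH(b)$ for every $n\in\N$. None of this uses that $b$ is outer.

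For a non-extreme $b$ with Pythagorean mate $a$ I would bring in Sarason's description of $\HH(b)$: $f\in\HH(b)$ if and only if $T_{\bar b}f\in T_{\bar a}H^{2}$, and then $T_{\bar b}f=T_{\bar a}g$ for a unique $g\in H^{2}$ with $\|f\|_{b}^{2}=\|f\|_{2}^{2}+\|g\|_{2}^{2}$. The computation that drives part~(2) is
$$T_{\bar b}(bf)=T_{|b|^{2}}f=T_{1-|a|^{2}}f=f-T_{\bar a}(af)$$
(using $T_{\bar b}T_{b}=T_{|b|^{2}}$ and $|a|^{2}+|b|^{2}=1$ a.e.\ on $\T$), so that $bf\in\HH(b)$ precisely when $f\in T_{\bar a}H^{2}$; since $T_{\bar a}H^{2}\subseteq\HH(b)$ always and $\|T_{\bar a}h\|_{b}^{2}=\|T_{\bar a}h\|_{2}^{2}+\|T_{\bar b}h\|_{2}^{2}\le2\|h\|_{2}^{2}$, condition $(c)$ is equivalent to $T_{\bar a}\colon H^{2}\to\HH(b)$ being a bounded bijection, hence (by injectivity of $T_{\bar a}$ and the open mapping theorem) to $\|T_{\bar a}h\|_{2}^{2}+\|T_{\bar b}h\|_{2}^{2}\ge c^{2}\|h\|_{2}^{2}$ for all $h$. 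Evaluating this on the Szeg\H{o} kernels gives $|a(\lambda)|^{2}+|b(\lambda)|^{2}\ge c^{2}$ throughout $\D$, which, since $|a|,|b|\le1$, is \eqref{Thm-CP}. Conversely, if \eqref{Thm-CP} holds the corona theorem produces $u,v\in H^{\infty}$ with $au+bv=1$, whence $f=T_{\bar a}T_{\bar u}f+T_{\bar v}T_{\bar b}f=T_{\bar a}(T_{\bar u}f+T_{\bar v}g)\in T_{\bar a}H^{2}$ for every $f\in\HH(b)$, with norm control, giving $(c)$. Finally, to close the loop back to $(a)$: the elementary inequalities $\min(1,r)(1-x)\le1-x^{r}\le\max(1,r)(1-x)$ on $[0,1]$, together with the Poisson representation of $\log|a|$, show that \eqref{Thm-CP} passes to $b^{r}$ for every $r>0$ and that non-extremeness is preserved; so $(c)$ holds for each $b^{1/n}$, whence $\HH(b)=\HH((b^{1/n})^{n})=\HH(b^{1/n})$ and then $\HH(b^{m/n})=\HH(b^{1/n})=\HH(b)$, and a squeeze via $\HH(b^{r_{1}})\subseteq\HH(b^{r})\subseteq\HH(b^{r_{2}})$ with rationals $r_{1}<r<r_{2}$ disposes of irrational $r$.

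For an extreme $b$ the same scheme works once part~(3) is in place. For an outer $b$, condition \eqref{Thm-b-inv} amounts to $\inf_{\D}|b|>0$, equivalently to the essential infimum of $|b|$ on $\T$ being positive (because $|b(\lambda)|=\exp\bigl(\int_{\T}P_{\lambda}\log|b|\,dm\bigr)$), and this property — like extremeness — is inherited by every $b^{r}$; hence $\eqref{Thm-b-inv}\Rightarrow(a)$ follows exactly as above by applying the equivalence to the roots $b^{1/n}$. The implication $\eqref{Thm-b-inv}\Rightarrow(c)$ I would obtain from $bH^{2}=H^{2}$ together with a direct estimate of the range norm on $\HH(b)=(I-T_{b}T_{\bar b})^{1/2}H^{2}$. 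The main obstacle is the reverse implication $(c)\Rightarrow\eqref{Thm-b-inv}$: here there is no Pythagorean mate and hence no factorization $\HH(b)=T_{\bar a}H^{2}$ to test against Szeg\H{o} kernels, so one is forced to use finer structural features of extreme $\HH(b)$ spaces — notably that the constant function $1$ does not lie in $\HH(b)$, and that if $\inf_{\D}|b|=0$ one can pick $\lambda_{n}\to\partial\D$ with $b(\lambda_{n})\to0$ along which the growth of $\|k^{b}_{\lambda_{n}}\|_{b}$ is incompatible with the bound $\|b^{j}k^{b}_{\lambda_{n}}\|_{b}\le N^{j}\|k^{b}_{\lambda_{n}}\|_{b}$ forced by $(c)$ — in order to reach a contradiction.
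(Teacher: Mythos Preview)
Your treatment of the non-extreme case is correct and takes a somewhat different, more self-contained route than the paper. The paper simply quotes Sarason's book for the equivalence of $(c)$ with the corona condition (its Proposition~\ref{PropSa} together with \cite[VIII-1, VIII-7]{Sa}), and then for \eqref{Thm-CP}$\Rightarrow(a)$ it observes that $a/a_r$ is invertible in $H^\infty$ (since $|a|^2/|a_r|^2\asymp 1$), so $\MM(\overline a)=\MM(\overline{a_r})$ directly and hence $\HH(b)=\HH(b^r)$ in one step. You instead rederive the multiplier characterization by computing $T_{\bar b}(bf)=f-T_{\bar a}(af)$ and testing the resulting operator lower bound on Szeg\H{o} kernels, and then reach general $r>0$ via integer powers, roots $b^{1/n}$, and a monotone squeeze. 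Both arguments are valid; yours is more hands-on with kernels, while the paper's avoids the rational/irrational detour by comparing the mates $a$ and $a_r$ directly.

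The extreme case, however, is not complete in your proposal. You correctly flag $(c)\Rightarrow\eqref{Thm-b-inv}$ as ``the main obstacle'', but what follows is only a heuristic: the suggestion to choose $\lambda_n$ with $b(\lambda_n)\to 0$ and derive a contradiction from the growth of $\|k^b_{\lambda_n}\|_b$ versus $\|b^j k^b_{\lambda_n}\|_b\le N^j\|k^b_{\lambda_n}\|_b$ is not carried out, and it is not clear how to make it work --- nothing in those norm inequalities obviously fails when $b(\lambda_n)\to 0$. Likewise, your $\eqref{Thm-b-inv}\Rightarrow(c)$ via ``$bH^2=H^2$ together with a direct estimate of the range norm'' is only a gesture; $T_b$ does not commute with $(I-T_bT_{\bar b})^{1/2}$, so a naive range-norm argument does not go through. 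The paper handles the entire extreme part by citing Sarason: \cite[VIII-1, VIII-5]{Sa} give $(c)\Leftrightarrow\eqref{Thm-b-inv}$ for extreme $b$, and then \eqref{Thm-b-inv}$\Rightarrow(a)$ is obtained not through $(c)$ and roots, but by using \cite[VIII-1]{Sa} to get $\HH(b)=\HH(\bar b)$ and $\HH(b^r)=\HH(\overline{b^r})$, and then applying Douglas factorization to the operator inequalities $c_1(I-T_{\bar b}T_b)\le I-T_{\overline{b^r}}T_{b^r}\le c_2(I-T_{\bar b}T_b)$ (which follow from $1-|b|^2\asymp 1-|b^r|^2$) to conclude $\HH(\bar b)=\HH(\overline{b^r})$. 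This $\HH(\bar b)$ route is the missing idea in your extreme-case argument.
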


\begin{Remark}
\begin{enumerate}
\item Since $b$ is outer, it has no zeros on $\D$ and so we can define $b^r$ by taking any logarithm of $b$. Note that $b^r \in \mathbf{b}(H^{\infty})$.
\item Statement (a) of Theorem \ref{Thm1} says that $\HH(b^r) = \HH(b)$ as sets. Though the norms on $\HH(b^r)$ and $\HH(b)$ are different, one sees from the closed graph theorem that they are equivalent. 
\item Statement (c) of the theorem says that $b$ is a {\em multiplier} of $\HH(b)$. We refer the reader to Sarason's book \cite{Sa} for further information and references about multipliers of $\HH(b)$. 
\item By Carleson's corona theorem \cite{Garnett}, the condition \eqref{Thm-CP} is equivalent to existence of $\phi, \psi \in H^{\infty}$ so that $a \phi + b \psi = 1$ on $\D$. Such a pair $(a, b)$ satisfying this condition is called a {\em corona pair}. 
\end{enumerate}
\end{Remark}



When $b$ is a {\em rational} outer function, or any positive power of a rational function (which is necessarily non-extreme (see Lemma \ref{mate})), we obtain the following complete description of 
$\HH(b)$ involving the derivatives of the reproducing kernels. Indeed, when $b=q^r$, where $q$ is outer and rational and $r>0$, we set
$$v_{r,\lambda}^\ell(z) := \frac{d^\ell}{d \overline{\lambda}^\ell}k_{\lambda}^{q^r}(z)=\frac{d^\ell}{d\overline{\lambda}^\ell}\left(\frac{1-\overline{q^r(\lambda)}q^r(z)}{1-\overline{\lambda}z}\right),$$
for any $z\in\D$, $\lambda\in\D^-$, and $\ell \geqslant 0$.
We let $H^2$ denote the classical Hardy space \cite{Garnett}. By means of the F\'{e}jer-Riesz theorem (see Section \ref{Ex}), one can prove that if $q$ is a rational function then so is its Pythagorean mate $a$. In this case, also notice  that for $\zeta \in \T$ we have $|q(\zeta)| = 1$ if and only if $a(\zeta) = 0$.


\begin{Theorem}\label{MainThm}
Suppose $q \in \mathbf{b}(H^{\infty})$ is a rational outer function and $r$ is a positive real number. Then 
\begin{enumerate}
\item $\HH(q^{r}) = \HH(q)$ as sets.
\item If $a$ is the Pythagorean mate for $q$ and $a$ has distinct zeros $\zeta_1, \ldots, \zeta_n$ on $\T$ with corresponding multiplicities $m_1, \ldots, m_n$, then 
\begin{enumerate}
\item the functions $v_{r,j}^\ell:=v_{r,\zeta_j}^\ell$ are well-defined and
belong to $\HH(q^r)$ for $1\leqslant j\leqslant n$ and $0\leqslant \ell \leqslant m_j-1$. Moreover, they are orthogonal to 
$$aH^2=\left(\prod_{j = 1}^{n} (z - \zeta_j)^{m_j} \right) H^2.$$
\item $\HH(q^r)$ is equal to 
$$\left(\prod_{j = 1}^{n} (z - \zeta_j)^{m_j} \right) H^2 \oplus \bigvee 
\left\{ 
v_{r,j}^\ell: 0 \leqslant \ell  \leqslant m_j - 1, 1 \leqslant j \leqslant n\right\},$$
where the orthogonal decomposition is in terms of the inner product in $\HH(q^r)$. 
\end{enumerate}
\end{enumerate}
\end{Theorem}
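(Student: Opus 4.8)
The plan is to reduce everything to the non-extreme, corona-pair setting and then identify the space explicitly. First, since $q$ is rational and outer, its Pythagorean mate $a$ is rational by the Fejér–Riesz argument, and $a$ vanishes on $\T$ exactly at the points where $|q| = 1$; in particular $a$ has only finitely many zeros on $\T$ and none inside $\D$ (being outer). Hence $\inf_{z \in \D}(|a(z)| + |q(z)|) > 0$: near a zero $\zeta_j$ of $a$ we have $|q(z)| \to 1$, while away from those points $|a|$ is bounded below. So $(a,q)$ is a corona pair, and by Theorem~\ref{Thm1}(2) we get $\HH(q^r) = \HH(q)$ as sets for every $r > 0$, which is part~(1). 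For the remainder we work with $b := q^r$, which is non-extreme with Pythagorean mate $a_r$ say; the key point is that $a_r$ and $a$ have the same zero set on $\T$ with the same multiplicities $m_j$ (since $|a_r|^2 = 1 - |q|^{2r}$ and $1 - |q|^{2r}$ vanishes to the same order as $1 - |q|^2$ at each $\zeta_j$), so it suffices to prove (2) with $b$ in place of $q^r$ and with $a_r$ playing the role of the mate — I will just call it $a$ again.

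Next I would invoke the standard description of $\HH(b)$ for non-extreme $b$: $\HH(b) = aH^2 \oplus \mathcal{M}$ where $\mathcal{M}$ is a space built from the Pythagorean mate, and more concretely that a function $f$ belongs to $\HH(b)$ iff $f = ag + h$ with appropriate $g \in H^2$ and a correction term controlled by $b$. The cleaner route here is: $aH^2$ is a closed subspace of $\HH(b)$ contained isometrically (up to the norm equivalence) and $\HH(b) \ominus aH^2$ is finite-dimensional of dimension $\sum_j m_j$ — this last fact comes from the corona condition, because $aH^2$ has finite codimension in $H^2$ exactly equal to $\sum m_j$ (the zeros of $a$ on $\T$, counted with multiplicity, via the obvious quotient $H^2/aH^2 \cong$ polynomials of degree $< \sum m_j$ when $a$ is a finite Blaschke-like rational factor times an outer rational with boundary zeros), and one checks $\HH(b)$ and $H^2$ have the same "$aH^2$-defect". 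Then it remains to show that the derivative-of-kernel functions $v_{r,j}^\ell$ (a) are well-defined elements of $\HH(b)$ at the boundary points $\zeta_j$, (b) are orthogonal to $aH^2$, and (c) span the complement.

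For (a), one shows $\lambda \mapsto k^{b}_\lambda$ and its first $m_j - 1$ $\bar\lambda$-derivatives extend continuously (hence the derivatives exist) up to $\zeta_j$ in $\HH(b)$-norm; the mechanism is that $\|k^b_\lambda\|^2 = k^b_\lambda(\lambda) = (1 - |b(\lambda)|^2)/(1 - |\lambda|^2)$ stays bounded as $\lambda \to \zeta_j$ precisely because $1 - |b(\lambda)|^2 = |a(\lambda)|^2$ vanishes to order $2m_j$ there while $1 - |\lambda|^2$ vanishes to order $1$; more generally the relevant norms of the $\ell$-th difference quotients stay bounded for $\ell \le m_j - 1$ because one gains $2m_j$ powers of $(1-|\lambda|)$ in the numerator — this is where rationality of $a$ and the exact matching of vanishing orders is used, and this is the step I expect to be the main technical obstacle, since it requires a careful Taylor/Leibniz estimate of $k^b_\lambda$ near the boundary rather than just a soft argument. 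For (b), orthogonality to $aH^2$ follows from the reproducing property: for $g \in H^2$, $\langle ag, k^b_\lambda\rangle_{\HH(b)}$ can be computed and shown to equal something that vanishes to order $m_j$ at $\zeta_j$ in $\lambda$ — intuitively because $ag$ vanishes at $\zeta_j$ to order $\ge m_j$ and the pairing against $k^b_\lambda$ "evaluates" $ag$ and its derivatives — so differentiating $\ell < m_j$ times and letting $\lambda \to \zeta_j$ gives $\langle ag, v_{r,j}^\ell\rangle = 0$. Finally, for (c), the $v_{r,j}^\ell$ are linearly independent (their leading behaviour as $\lambda\to\zeta_j$ distinguishes them, e.g. by comparing Taylor coefficients at distinct points $\zeta_j$ and successive derivatives at each), there are exactly $\sum_j m_j$ of them, they lie in $\HH(b) \ominus aH^2$ by (a)–(b), and that complement has dimension exactly $\sum_j m_j$ — so they form a basis of it, giving the orthogonal decomposition in part~(2)(b). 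Translating back through $\HH(q^r) = \HH(q)$ as sets (with equivalent norms) and noting the orthogonal decomposition is stated in the $\HH(q^r)$ inner product completes the proof.
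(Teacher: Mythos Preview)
Your argument for part~(1) via the corona condition is correct and matches the paper. Your sketch of orthogonality in~(2)(a) is also on the right track: the paper makes this precise by using the weak convergence $v^{\ell}_{r,t\zeta_j}\to v^{\ell}_{r,\zeta_j}$ together with the Leibniz expansion $(ag)^{(\ell)}(t\zeta_j)=\sum_p\binom{\ell}{p}a^{(p)}(t\zeta_j)g^{(\ell-p)}(t\zeta_j)$ and the estimates $|a^{(p)}(t\zeta_j)|\lesssim(1-t)^{m_j-p}$, $|g^{(\ell-p)}(t\zeta_j)|\lesssim(1-t)^{-(\ell-p)-1/2}$. For the existence of the boundary kernels $v_{r,j}^\ell$ the paper does not estimate norms directly as you propose; it instead verifies the integral condition $\int_\T |\log|q^r(w)||\,|w-\zeta_j|^{-2m_j}\,dm(w)<\infty$ and invokes a known theorem (Fricain--Mashreghi) that gives both the existence of boundary derivatives for every $f\in\HH(q^r)$ and the membership $v^{\ell}_{r,\zeta_j}\in\HH(q^r)$ for $0\le\ell\le m_j-1$.

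The genuine gap is in your dimension count for~(2)(b). You claim $aH^2$ has codimension $\sum_j m_j$ in $H^2$ via ``$H^2/aH^2\cong$ polynomials of degree $<\sum m_j$''. This is false: since $a$ is outer, $aH^2$ is \emph{dense} in $H^2$ (Beurling), so the $H^2$-codimension is zero in the only meaningful sense and the quotient description you invoke does not hold. What is true is that $aH^2$ has codimension $N=\sum m_j$ in $\HH(q)$, but that is exactly the Sarason-type decomposition $\HH(q)=aH^2\dotplus\mathscr{P}_{N-1}$ which the paper explicitly sets out to avoid. Instead of counting dimensions, the paper proves the reverse inclusion directly: writing $\HH(q^r)=\MM(\overline a)$, it shows that if $f=T_{\overline a}g$ satisfies $f^{(\ell)}(\zeta_j)=0$ for all admissible $j,\ell$ (equivalently $f\perp v^{\ell}_{r,\zeta_j}$), then necessarily $\widehat g(0)=\cdots=\widehat g(N-1)=0$, whence $T_{\overline a}g\in aH^2$. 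The mechanism is that $a k_{t\zeta_j,\ell}$ converges in $H^2$ to a nonzero multiple of $z^\ell(z-\zeta_j)^{m_j-\ell-1}\prod_{k\ne j}(z-\zeta_k)^{m_k}$, and these polynomials form a basis of $\mathscr{P}_{N-1}$. This lemma replaces your dimension argument and is the step your outline is missing.
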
 

Writing $v_j^\ell=v_{1,j}^\ell$, the theorem above implies that $\HH(q^r)$ is equal to 
$$\left(\prod_{j = 1}^{n} (z - \zeta_j)^{m_j} \right) H^2 \dotplus \bigvee \left\{
v_{j}^\ell(z): 0 \leqslant \ell  \leqslant m_j - 1, 1 \leqslant j \leqslant n\right\},$$
where the sum is no longer necessarily orthogonal.

It was shown in  \cite{ransford}, and rediscovered in \cite{RCDR}, that 
\begin{eqnarray}\label{sardecomp}
\HH(q) = \left(\prod_{j = 1}^{n} (z - \zeta_j)^{m_j} \right) H^2 \dotplus \mathscr{P}_{N - 1},
\end{eqnarray}
where 
$N = \sum_{j = 1}^{n} m_j$,
$\mathscr{P}_{N - 1}$ is the $N$-dimensional vector space of polynomials of degree at most $N - 1$, and the sum is an algebraic direct sum (not necessarily an orthogonal one). The novelty of our result is that we can precisely identify the orthogonal complement of $ aH^2=\left(\prod_{j = 1}^{n} (z - \zeta_j)^{m_j} \right) H^2$ in $\HH(q)$
without using \eqref{sardecomp}. 

In a recent preprint, Lanucha and Nowak \cite{LanNow} examined when an $\HH(b)$ space is isomorphic to a Dirichlet type
space. Their discussion naturally leads to the situation when
$a$ is a polynomial with simple zeros on $\T$ and a similar
description of $\HH(b)$ for such $a$.


A key ingredient used to show statement (1) of Theorem \ref{MainThm}, and
an added bonus to our result, is that if $a_r$ is the Pythagorean mate for $q^r$ then 
the co-analytic Toeplitz operators $T_{\overline{a}}$ and $T_{\overline{a_r}}$ on $H^2$ have the same range, namely $\HH(q)$. 

\section{Preliminaries}

There are several equivalent definitions of the de Branges-Rovnyak space $\HH(b)$.
We can, for instance,
define it 
in the standard way \cite{Paulsen} as the reproducing kernel Hilbert space
associated with the 
(positive definite) reproducing kernel 
$$k^{b}_{\lambda}(z) := \frac{1 - \overline{b(\lambda)} b(z)}{1 - \overline{\lambda} z}, \qquad \lambda, z \in \D.$$
By definition, 
$f(\lambda) = \langle f, k^{b}_{\lambda}\rangle_b$ for all $f \in \HH(b)$ and $\lambda \in \D$, 
where $\langle\cdot,\cdot\rangle_b$ represents the scalar product in $\HH(b)$. 

The space $\HH(b)$ can also be defined as the range space $(I-T_bT_{\overline{b}})^{1/2}H^2$ equipped
with the norm which makes $(I-T_bT_{\overline{b}})^{1/2}$ a partial isometry. Here $T_{\varphi}$ is the Toeplitz
operator on $H^2$ with symbol $\varphi\in L^{\infty}(\T)$ defined by 
$$T_{\varphi}f=P_+(\varphi f), \qquad f\in H^2,$$ where $P_+$ is  the orthogonal
projection of $L^2(\T)$ onto $H^2$. The book \cite{Sa} is the classic reference for $\HH(b)$ spaces. 

When $\|b\|_{\infty} < 1$, $\HH(b)$ turns out to be a renormed version of 
$H^2$ while if $b$ is an inner function, then $\HH(b)$ turns out to be one of the classical and well-studied model spaces $H^2 \ominus b H^2$. 

When $b$ is non-extreme and $a$ is its Pythagorean mate, two important  (not necessarily closed) vector spaces of functions in $\HH(b)$  are 
$$\MM(a) := T_{a} H^2 \quad \mbox{and} \quad \MM(\overline{a}) := T_{\overline{a}} H^2.$$
It follows from the Douglas factorization theorem and the operator inequalities 
\begin{equation}\label{DouglasFact}
T_{a} T_{\overline{a}} \leqslant T_{\overline{a}} T_{a} \quad \mbox{and} \quad T_{\overline{a}} T_{a} = I - T_{\overline{b}} T_{b} \leqslant I - T_{b} T_{\overline{b}}
\end{equation}
 that 
$\MM(a) \subset \MM(\overline{a}) \subset \HH(b)$ (see \cite[p.~24]{Sa}).

For technical reasons, we will make use of the space $\HH(\overline{b})$ which, for any $b \in \mathbf{b}(H^{\infty})$, is defined similarly as with $\HH(b)$ but as the range space $(I - T_{\overline{b}} T_{b})^{1/2} H^2$. The operator inequalities from \eqref{DouglasFact} show that $\HH(\overline{b})$ is contractively contained in $\HH(b)$. 

\section{Corona pairs}

This following lemma is well-known but we record it here along with a proof for the sake of completeness and for the discussion of the examples in Section \ref{Ex}. 

\begin{Lemma}\label{mate}
Suppose $q \in \mathbf{b}(H^{\infty})$ is rational and not inner. Then $q$ is non-extreme and, if $a$ is the Pythagorean mate for $q$, then $a$ is also rational. 
\end{Lemma}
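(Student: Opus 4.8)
The plan is to produce the Pythagorean mate $a$ concretely via the Fej\'er–Riesz theorem and then check it has all the required properties. First I would reduce to the case where $q$ is a genuine rational function with no poles on $\overline{\D}$: since $q \in \mathbf{b}(H^\infty)$ is rational, all its poles lie in $\{|z| > 1\}$, and $q$ extends continuously to $\T$. Because $q$ is not inner, $|q|$ is not identically $1$ on $\T$, so $1 - |q|^2$ is a non-negative rational function on $\T$ that does not vanish identically. The key point is that $|q(\zeta)|^2 = q(\zeta)\overline{q(\zeta)}$ for $\zeta \in \T$ can be written, using $\overline{\zeta} = 1/\zeta$, as a rational function of $\zeta$ of the form $p(\zeta)/(\zeta^d \widetilde{s}(\zeta))$ where $p$ is a ``self-inversive'' trigonometric-polynomial-type factor; concretely, $1 - |q|^2$ restricted to $\T$ equals $|Q(\zeta)|^2/|S(\zeta)|^2$ for suitable polynomials $Q, S$ with $S$ zero-free on $\overline{\D}$ (the denominator coming from clearing the poles of $q$).

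Next I would invoke Fej\'er–Riesz: a non-negative trigonometric polynomial (here $|S|^2 - |(\text{numerator of }q)|^2$ up to the right normalization, which is non-negative on $\T$ because $\|q\|_\infty \le 1$) is the squared modulus on $\T$ of a polynomial, and one may choose that polynomial to have all its zeros in $\overline{\D}$. Dividing by $S$, this yields a rational function $a_0$, analytic on $\D$ and continuous on $\overline{\D}$, with $|a_0|^2 = 1 - |q|^2$ a.e. on $\T$, hence $|a_0|^2 + |q|^2 = 1$ on $\T$; since the Fej\'er–Riesz factor can be taken with zeros in $\overline{\D}$ and $S$ is zero-free there, $a_0$ has no poles in $\D$, so $a_0 \in H^\infty$, and $\|a_0\|_\infty \le 1$ follows from $|a_0| \le 1$ on $\T$ and the maximum principle. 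Then I would adjust $a_0$ by a unimodular constant to arrange $a(0) > 0$; this is possible provided $a_0(0) \ne 0$, and if $a_0(0) = 0$ one divides out the corresponding Blaschke-type factor — but in fact the canonical (outer) choice of Fej\'er–Riesz factor is exactly the one with no zeros in $\D$, so take $a$ to be that outer factor times the right constant. Uniqueness of the Pythagorean mate among outer functions in $\mathbf{b}(H^\infty)$ with $a(0) > 0$ (stated in the excerpt) then identifies this $a$ with \emph{the} Pythagorean mate, and by construction it is rational.

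Finally I should record the side remark in the excerpt that $\log(1-|q|) \in L^1(\T)$, i.e.\ that $q$ is non-extreme: this is immediate once we have a rational $a_0 \not\equiv 0$ with $|a_0|^2 = 1 - |q|^2$ on $\T$, since $\log|a_0|$ is integrable on $\T$ (a rational function has only finitely many zeros on $\T$, each contributing a logarithmic singularity that is integrable), and $\log(1-|q|) = \tfrac12\log(1-|q|^2) - \log(1+|q|) \geq \log|a_0| - \log 2$ is bounded below by an $L^1$ function and above by $0$. The main obstacle I anticipate is the bookkeeping in the first two steps: writing $1 - |q|^2$ on $\T$ as the modulus-squared of a rational function requires care with the poles of $q$ (clearing them introduces the denominator $|S|^2$) and with checking that the resulting trigonometric polynomial in the numerator is genuinely non-negative on $\T$ so that Fej\'er–Riesz applies — this is where $\|q\|_\infty \le 1$ is used — and then verifying that the outer Fej\'er–Riesz factor, after dividing by $S$, is still rational with no poles in $\overline{\D}$. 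Everything after that (the constant adjustment, uniqueness, non-extremality) is routine.
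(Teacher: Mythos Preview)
Your approach is essentially identical to the paper's: write $q = p_1/p_2$ with $p_2$ zero-free on $\overline{\D}$, apply Fej\'er--Riesz to the non-negative trigonometric polynomial $|p_2|^2 - |p_1|^2$ on $\T$ to obtain $|p|^2$ with $p$ an analytic polynomial, and set $a = p/p_2$. One slip to fix in your write-up: you first say the Fej\'er--Riesz factor may be chosen with ``all its zeros in $\overline{\D}$,'' which is the wrong side (that choice is not outer); you correctly self-correct to ``no zeros in $\D$'' a few lines later, and with that outer choice---which is exactly what the paper takes---$a$ is automatically outer with $a(0)>0$ after a unimodular normalization, so the detour through ``dividing out a Blaschke-type factor'' is unnecessary, and non-extremality follows immediately from the existence of the mate rather than from a separate $L^1$ estimate on $\log(1-|q|)$.
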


\begin{proof}
Since $q$ is rational then $q = p_1/p_2$ where $p_1$ and $p_2$ are analytic polynomials 
and  $p_2$ has no zeros on $\D^{-}$. We can, of course, choose $p_2$ such that $p_2(0)>0$. 
Since $q \in \mathbf{b}(H^{\infty})$, we see that $1 - |q(e^{i \theta})|^2 \geqslant 0$ for all $\theta$ and so 
$|p_2(e^{i \theta})|^2 - |p_1(e^{i \theta})|^2$ is a non-negative trigonometric polynomial. Furthermore, $|p_2(e^{i \theta})|^2 - |p_1(e^{i \theta})|^2$ is not the zero function since we are assuming that $q$ is not an inner function. 
By the F\'{e}jer-Riesz theorem, $|p_2(e^{i \theta})|^2 - |p_1(e^{i \theta})|^2 = |p(e^{i \theta})|^2$, where $p$ is an analytic polynomial which is zero free in $\D$ and $p(0)>0$. 

Let 
$a = p/p_2$. 
Note that $a$ is rational and zero free in $\D$, hence outer. Moreover, $a(0)>0$. 

Furthermore, on $\T$ we have
$$|a|^2  = \left|\frac{p}{p_2}\right|^2
 = \frac{|p_2|^2 - |p_1|^2}{|p_2|^2}
 = 1 - \left|\frac{p_1}{p_2}\right|^2
 = 1 - |q|^2.
$$ 
This means that $(a,q)$ is a Pythagorean pair which, in particular, implies that
$q$ is non-extreme. 
\end{proof}


\begin{Lemma}\label{simCoP}
Suppose $b \in \mathbf{b}(H^{\infty})$ is outer and $r$ is a positive real number. 
Then $b$ and $b^r$ are simultaneously 
non-extreme. Moreover, if $a_r$ is the Pythagorean mate for $b^r$, the pairs 
$(a,b)$ and $(a_r,b^r)$ are simultaneously corona.
\end{Lemma}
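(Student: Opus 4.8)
The plan is to treat the two assertions separately; each one reduces to an elementary inequality for the function $t \mapsto 1 - t^{s}$ on $[0,1]$ together with a transfer of information from $\T$ to $\D$ using that the relevant functions are outer.

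First, recall that $g \in \mathbf{b}(H^{\infty})$ is non-extreme exactly when $\log(1 - |g|) \in L^{1}(\T)$. The elementary input is the inequality $1 - t^{s} \geqslant \min(1,s)\,(1 - t)$, valid for all $t \in [0,1]$ and $s > 0$: for $s \geqslant 1$ it follows from $t^{s} \leqslant t$, and for $s < 1$ one checks that $t \mapsto 1 - t^{s} - s(1-t)$ is decreasing on $[0,1]$ and vanishes at $t = 1$. Applying this with $t = |b|$ and $s = r$, and using $1 - |b|^{r} \leqslant 1$, we get $\log\min(1,r) + \log(1 - |b|) \leqslant \log(1 - |b^{r}|) \leqslant 0$ on $\T$, so $\log(1 - |b^{r}|) \in L^{1}(\T)$ whenever $\log(1 - |b|) \in L^{1}(\T)$. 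Since $|b| = (|b^{r}|)^{1/r}$, the same inequality with $b^{r}$ and $1/r$ in the roles of $b$ and $r$ yields the reverse implication. Hence $b$ and $b^{r}$ are simultaneously non-extreme; if they are extreme there is nothing more to prove, so from now on assume both are non-extreme, with Pythagorean mates $a$ and $a_{r}$ respectively.

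Next I would compare $a_{r}$ and $a$. On $\T$ we have $|a|^{2} = 1 - |b|^{2}$ and $|a_{r}|^{2} = 1 - |b|^{2r}$ a.e., so writing $s = |b|^{2}$ gives $|a_{r}|^{2}/|a|^{2} = (1 - s^{r})/(1 - s)$. The function $s \mapsto (1 - s^{r})/(1 - s)$ extends continuously to $[0,1]$ (with value $r$ at $s = 1$) and is strictly positive there, hence bounded above and below by positive constants depending only on $r$; thus $|a_{r}| \asymp |a|$ on $\T$. Since $a$ and $a_{r}$ are outer, the harmonic functions $\log|a|$ and $\log|a_{r}|$ on $\D$ are the Poisson integrals of their boundary values, so boundedness of $\log|a_{r}| - \log|a|$ on $\T$, together with positivity and unit mass of the Poisson kernel, forces this difference to be bounded on all of $\D$; hence $|a_{r}| \asymp |a|$ throughout $\D$, with constants depending only on $r$. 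Likewise $b$ and $b^{r}$ are outer with $|b^{r}| = |b|^{r}$ on $\T$, so linearity of the Poisson integral gives $|b^{r}(z)| = |b(z)|^{r}$ for every $z \in \D$.

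Finally, the corona transfer is a short case analysis. If $(a,b)$ is corona, say $|a(z)| + |b(z)| \geqslant \delta > 0$ on $\D$, fix $z$: either $|b(z)| \geqslant \delta/2$, whence $|b^{r}(z)| = |b(z)|^{r} \geqslant (\delta/2)^{r}$, or $|b(z)| < \delta/2$, whence $|a(z)| \geqslant \delta/2$ and the comparability $|a_{r}| \asymp |a|$ bounds $|a_{r}(z)|$ below by a positive multiple of $\delta$; either way $|a_{r}(z)| + |b^{r}(z)|$ is bounded below uniformly in $z$, so $(a_{r},b^{r})$ is corona. The converse is symmetric, splitting according to whether $|b(z)|^{r}$ is at least or less than half the corona constant of $(a_{r},b^{r})$. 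The one step that is more than bookkeeping is the passage from $|a_{r}| \asymp |a|$ on $\T$ to the same comparability on $\D$: this is precisely where outerness of the Pythagorean mates enters, and it can be arranged either through the Poisson representation of $\log|a|$ and $\log|a_{r}|$ as above, or by observing that $a_{r}/a$ and $a/a_{r}$ are outer functions with bounded boundary modulus and hence belong to $H^{\infty}$.
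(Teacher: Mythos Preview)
Your proof is correct and follows essentially the same path as the paper's: both parts rest on the comparability $(1-t^{r})/(1-t)\asymp 1$ on $[0,1)$, the transfer of $|a_r|\asymp|a|$ from $\T$ to $\D$ via outerness (you use the Poisson representation of $\log|a|$, the paper invokes Smirnov's theorem; these are equivalent), and the observation $|b^{r}(z)|=|b(z)|^{r}$ on $\D$. The only cosmetic difference is that you do the corona equivalence by a direct $\delta/2$ case split, whereas the paper argues by contrapositive along a sequence where the infimum is approached.
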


\begin{proof}
Since
\begin{equation}\label{E3}
\frac{1 - x^r}{1 - x} \asymp 1, \quad x\in [0,1),
\end{equation} 
we see that 
$1 - |b|^r \asymp 1 - |b|$ when $b\in \mathbf{b}(H^{\infty})$,
from which we deduce
the first part of the Lemma.

Now observe that 
\[
 \frac{|a|^2}{|a_r|^2}=\frac{1-|b|^2}{1-|b^2|^r}\asymp 1,
\]
and since $a$ and $a_r$ are outer, Smirnov's theorem (which says that if the boundary function for the quotient of two outer functions is bounded on $\T$, then $f \in H^{\infty}$), shows that $a/a_r$ is invertible in $H^{\infty}$.  
Thus both expressions 
$$\inf_{z\in\D}(|a(z)|+|b(z)|) \quad \mbox{and} \quad \inf_{z\in\D}(|a_r(z)|+|b^r(z)|)$$
 are strictly positive (or not) simultaneously. Indeed, if there is a sequence $\{z_n\}_{n \geqslant 1}$ in $\D$ such that one expression goes to $0$ then, since both $a(z_n)$ and $b(z_n)$  go to zero, 
the other expression will go to zero as well.
\end{proof}

A special situation where $b$ forms a corona pair with its Pythagorean mate is
when $b$ is rational.

\begin{Lemma}\label{a1bCP}
Suppose $q \in \mathbf{b}(H^{\infty})$ is rational and not inner. 
If $a$ is the Pythagorean mate for $q$, then 
$(a,q)$ is a corona pair.
\end{Lemma}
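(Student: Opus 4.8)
The plan is to verify the corona condition $\inf_{z\in\D}(|a(z)|+|q(z)|)>0$ directly, exploiting the fact that $a$ and $q$ are rational. Since $q=p_1/p_2$ and $a=p/p_2$ with $p_2$ zero-free on $\D^-$ (as in the proof of Lemma \ref{mate}), the quantity $|a(z)|+|q(z)|$ is a ratio of continuous functions on the compact set $\D^-$, with denominator $|p_2(z)|$ bounded away from $0$. Hence it suffices to show that the numerator $|p(z)|+|p_1(z)|$ does not vanish anywhere on $\D^-$; equivalently, that $p$ and $p_1$ have no common zero in $\D^-$.

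First I would observe that $p$ is zero-free on $\D$ (it is outer, being the Fej\'er--Riesz factor), so any common zero of $p$ and $p_1$ would have to lie on $\T$. So suppose $\zeta\in\T$ with $p(\zeta)=0$ and $p_1(\zeta)=0$. From the identity $|p(e^{i\theta})|^2 = |p_2(e^{i\theta})|^2-|p_1(e^{i\theta})|^2$ on $\T$ (established in Lemma \ref{mate}) and $p_1(\zeta)=0$ we would get $|p_2(\zeta)|^2=|p(\zeta)|^2=0$, contradicting the fact that $p_2$ has no zeros on $\D^-$. Therefore $p$ and $p_1$ share no zero on $\T$ either, so $|p|+|p_1|>0$ on all of $\D^-$, and by compactness it is bounded below by a positive constant there.

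Putting this together, $|a(z)|+|q(z)| = \bigl(|p(z)|+|p_1(z)|\bigr)/|p_2(z)| \geqslant \bigl(\min_{\D^-}(|p|+|p_1|)\bigr)/\bigl(\max_{\D^-}|p_2|\bigr) > 0$ for all $z\in\D$, which is exactly the corona condition \eqref{Thm-CP}. By the remark following Theorem \ref{Thm1} (Carleson's corona theorem), this shows $(a,q)$ is a corona pair. Alternatively, one can phrase the last step as: since $(a,q)$ is already known to be non-extreme with rational mate by Lemma \ref{mate}, the verified lower bound combined with Lemma \ref{simCoP} would also let one transfer the conclusion, but the direct compactness argument is cleaner. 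The only mild subtlety — really the one point needing care — is handling potential common zeros on the boundary circle $\T$, which is where the Fej\'er--Riesz identity does the work; away from $\T$ there is nothing to check since $p$ is zero-free on $\D$.
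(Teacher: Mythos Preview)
Your argument is correct. Both your proof and the paper's proof verify the corona lower bound $\inf_{\D}(|a|+|q|)>0$ via compactness on $\D^-$, but the executions differ. The paper argues geometrically: it locates the (finitely many) zeros $\zeta_j$ of $a$ on $\T$, observes that $|q(\zeta_j)|=1$ there, chooses small disks $D_j$ around each $\zeta_j$ on which continuity forces $|q|\geqslant\tfrac12$, and then on the compact remainder $F=\D^-\setminus\bigcup D_j$ one has $|a|\geqslant\delta>0$. You instead pass to the polynomial numerators $p,p_1$ (with common denominator $p_2$ bounded away from zero on $\D^-$) and rule out a common zero in $\D^-$: in $\D$ because $p$ is already zero-free there, and on $\T$ because the Fej\'er--Riesz identity $|p|^2=|p_2|^2-|p_1|^2$ would force $p_2$ to vanish. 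Your version is a bit slicker since it avoids the region-splitting and the $\varepsilon$--$\delta$ choice of disk radii; the paper's version, on the other hand, makes no explicit use of the polynomial factorization and would adapt verbatim to any $a,q$ that extend continuously to $\D^-$ with $|a|^2+|q|^2=1$ on $\T$ and $a$ having only finitely many boundary zeros. Your closing remark invoking Lemma~\ref{simCoP} is unnecessary and can be dropped; the direct compactness bound already gives the conclusion.
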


\begin{proof}
According to the proof of Lemma~\ref{mate}, we know that $a$ is rational, $a=p/p_2$, where $p$ and $p_2$ are polynomials, $p_2$ has no zeros in $\D^-$ and $p$ is zero free in $\D$. In particular, $a$ is analytic in an open neighborhood of $\D^{-}$ and thus has a finite number of zeros on $\T$, say $\{\zeta_1, \ldots, \zeta_n\}$. Note that, due to the identity $|a|^2 + |q|^2 = 1$ on $\T$,  the zeros of $a$ (on $\T$) must lie where $q$  is unimodular on $\T$. 

%
%
%

 Let $D_j$ be disjoint open disks with center at the zeros $\zeta_j$ of $a$ and let 
$$F = \D^{-} \setminus \bigcup_{j = 1}^{n} D_j.$$
By making the disks smaller, one can, by using the continuity of $|q|$ 
on $\D^{-}$, arrange things so that $|q| \geqslant \tfrac{1}{2}$ on each $D_j\cap\D^-$. 

Notice that $F$ is closed and omits all of the zeros of $a$ in $\D^-$ and so 
$$\inf_{z \in F} |a(z)| = \delta > 0.$$
Thus 
$$\inf_{z \in \D} (|a(z)| + |q(z)|) \geqslant \min(\tfrac{1}{2}, \delta) > 0$$
concluding the proof. 
\end{proof}


The first statement of Theorem \ref{Thm1} depends on the following two results. The first is from Sarason's book \cite[p.~62]{Sa}.

\begin{Proposition}\label{PropSa}
For $b \in \mathbf{b}(H^{\infty})$ and non-extreme, the following are equivalent:
\begin{enumerate}
\item $(a, b)$ is a corona pair; 
\item $\HH(b) = \MM(\overline{a})$.
\end{enumerate}
\end{Proposition}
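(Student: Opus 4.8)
The plan is to establish both implications using the operator-theoretic description of $\HH(b)$ as $\MM(\overline{a}) = T_{\overline{a}}H^2$ (when the relevant ranges coincide) together with the Douglas range-inclusion theorem, which translates range inclusions into operator inequalities. Recall from \eqref{DouglasFact} that $T_{\overline{a}}T_a = I - T_{\overline{b}}T_b$, so $\MM(\overline{a}) = T_{\overline{a}}H^2$ and the $\HH(\overline{b})$-norm is the one making $T_{\overline{a}}$ a coisometry onto $\MM(\overline{a})$; the space $\HH(b) \supseteq \HH(\overline{b})$ always holds contractively.

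For the direction (1) $\Rightarrow$ (2): assuming $(a,b)$ is a corona pair, the remark after Theorem \ref{Thm1} (Carleson's corona theorem) gives $\phi,\psi \in H^\infty$ with $a\phi + b\psi = 1$ on $\D$, hence $T_{\overline{a}}T_{\overline{\phi}} + T_{\overline{b}}T_{\overline{\psi}} = I$ on $H^2$ (using multiplicativity of co-analytic Toeplitz operators). First I would show $\MM(b) \subseteq \MM(\overline a)$, or more directly that $T_b$ maps $\HH(b)$ into $\MM(\overline a)$; combined with $T_{\overline a}H^2 \subseteq \MM(\overline a)$ and the identity above, one gets $H^2 = T_{\overline a}H^2 + T_{\overline b}(T_{\overline\psi}H^2)$, and one then wants to conclude $\HH(b) \subseteq \MM(\overline a)$. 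The cleanest route is: $\HH(b) = (I - T_bT_{\overline b})^{1/2}H^2$, and one shows $(I-T_bT_{\overline b})^{1/2}H^2 \subseteq T_{\overline a}H^2$ by verifying the operator inequality $I - T_bT_{\overline b} \leqslant C\, T_{\overline a}T_a$ for some constant $C$, which by Douglas is equivalent to the desired range inclusion. This inequality should follow from the corona identity: writing $I = T_{\overline a}T_{\overline\phi} + T_{\overline b}T_{\overline\psi}$ and its adjoint, one estimates $I - T_bT_{\overline b}$ from above by a multiple of $T_{\overline a}T_a + $ (something controlled), using $T_{\overline b}T_b \geqslant T_bT_{\overline b}$-type relations and the boundedness of $\phi,\psi$. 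Since the reverse inclusion $\MM(\overline a) \subseteq \HH(b)$ is automatic from \eqref{DouglasFact}, equality of sets follows, and the norms are then equivalent by the closed graph theorem (as in the Remark after Theorem \ref{Thm1}); in fact with the natural norms one gets the identity isometrically or near-isometrically, but set equality is all that is claimed.

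For the direction (2) $\Rightarrow$ (1): assuming $\HH(b) = \MM(\overline a) = T_{\overline a}H^2$ as sets, I would argue by contradiction. If $(a,b)$ is not corona, by \eqref{Thm-CP} there is a sequence $\{z_n\} \subset \D$ with $|a(z_n)| + |b(z_n)| \to 0$. The hypothesis $\HH(b) = T_{\overline a}H^2$ with equivalent norms means there is $C$ with $\|k^b_\lambda\|_b \leqslant C \|k^b_\lambda\|_{\MM(\overline a)}$ for every reproducing kernel, where the $\MM(\overline a)$-norm of $k^b_\lambda$ can be computed explicitly: one has $T_{\overline a} k_\lambda = \overline{a(\lambda)} k_\lambda$ (Cauchy kernel is an eigenvector), so the $\MM(\overline a)$-norm of the normalized image involves $|a(\lambda)|^{-1}$, while $\|k^b_\lambda\|_b^2 = k^b_\lambda(\lambda) = (1-|b(\lambda)|^2)/(1-|\lambda|^2)$. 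Evaluating both sides at $\lambda = z_n$ and comparing the rates at which they blow up (the $b$-norm behaves like $(1-|b(z_n)|^2)^{1/2}/(1-|z_n|^2)^{1/2}$, the $\MM(\overline a)$-norm like $1/(|a(z_n)|(1-|z_n|^2)^{1/2})$) forces $1-|b(z_n)|^2 \gtrsim |a(z_n)|^{-2}$, which, since $|a|^2 = 1-|b|^2$ only on $\T$ but $|a(z_n)|, |b(z_n)| \to 0$ inside $\D$, yields a contradiction: the left side tends to $1$ while the right side tends to $+\infty$.

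The main obstacle I expect is the careful bookkeeping in (1) $\Rightarrow$ (2): turning the scalar corona identity $a\phi + b\psi = 1$ into the operator inequality $I - T_bT_{\overline b} \leqslant C\, T_{\overline a}T_a$ requires handling the non-commutativity of Toeplitz operators (products like $T_{\overline a}T_{\overline\phi}$ versus $T_{\overline{a\phi}}$ differ by Hankel-type corrections on $H^2$, though for the anti-analytic factor $\overline\phi$ one does have $T_{\overline a}T_{\overline\phi} = T_{\overline{a\phi}}$, which helps). Managing these identities and the positivity estimates — rather than any deep idea — is where the real work lies; the converse direction is comparatively soft once the kernel-norm computations are set up. Since this is precisely Sarason's result \cite[p.~62]{Sa}, I would ultimately cite his argument, but the sketch above is the route I would reconstruct.
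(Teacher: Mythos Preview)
The paper does not prove Proposition~\ref{PropSa}; it simply attributes the result to Sarason \cite[p.~62]{Sa} and uses it as a black box. Since you also conclude by deferring to Sarason, there is no ``paper's proof'' to compare against, and your final citation matches the paper exactly.

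That said, the reconstruction you sketch has a concrete gap in the direction $(2)\Rightarrow(1)$. You assert that the $\MM(\overline a)$-norm of $k^b_\lambda$ behaves like $|a(\lambda)|^{-1}(1-|\lambda|^2)^{-1/2}$, justifying this via $T_{\overline a}k_\lambda=\overline{a(\lambda)}\,k_\lambda$. But that eigenvector identity concerns the \emph{Cauchy} kernel $k_\lambda$, not the $\HH(b)$-kernel $k^b_\lambda=(1-\overline{b(\lambda)}b)k_\lambda$; the preimage of $k^b_\lambda$ under $T_{\overline a}$ is not a scalar multiple of $k_\lambda$, and its $H^2$-norm is not $|a(\lambda)|^{-1}\|k_\lambda\|_{H^2}$. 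In fact the reproducing kernel of $\MM(\overline a)$ at $\lambda$ is $T_{\overline a}(ak_\lambda)$, with squared norm $\int_\T |a|^2|k_\lambda|^2\,dm=(1-|\lambda|^2)^{-1}\,P[|a|^2](\lambda)$, where $P[\cdot]$ denotes the Poisson extension; comparing this with $k^b_\lambda(\lambda)=(1-|b(\lambda)|^2)/(1-|\lambda|^2)$ along a sequence with $|a(z_n)|+|b(z_n)|\to 0$ does not immediately yield a contradiction, because $P[|a|^2](z_n)$ need not tend to~$0$ even though $|a(z_n)|^2$ does. Moreover, even accepting your (incorrect) norm formula, the inequality $\|k^b_\lambda\|_b\leqslant C\|k^b_\lambda\|_{\MM(\overline a)}$ you write down would give $(1-|b|^2)|a|^2\lesssim 1$, which is trivially true; the contradiction you want comes from the \emph{reverse} inequality, which you do not state.

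Your sketch of $(1)\Rightarrow(2)$ is headed in a reasonable direction (reduce to $I-T_bT_{\overline b}\leqslant C\,T_{\overline a}T_a$ via Douglas), but since $T_{\overline a}T_a=I-T_{\overline b}T_b$, this amounts to $H_{\overline b}^*H_{\overline b}\leqslant (C-1)T_a^*T_a$, and the passage ``this inequality should follow from the corona identity'' hides exactly the substance of Sarason's argument. As written it is an outline rather than a proof.
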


The second is the following. 

\begin{Proposition}\label{PropRngToe}
If $a, a_1 \in H^{\infty}$ are two outer functions such that 
$a/a_1$ and $a_1/a$ belong to $L^{\infty}$,
then $\MM(\overline{a}) = \MM(\overline{a_1})$.
\end{Proposition}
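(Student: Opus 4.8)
The plan is to show the two range spaces coincide as sets with equivalent norms by exhibiting a bounded invertible operator between them that implements the identity. The key observation is that since $a/a_1$ and $a_1/a$ are both bounded on $\T$, Smirnov's theorem (invoked exactly as in the proof of Lemma~\ref{simCoP}) shows that $a/a_1$ is an invertible element of $H^\infty$; write $u := a/a_1 \in H^\infty$ and $u^{-1} = a_1/a \in H^\infty$. First I would record that $\MM(\overline{a}) = T_{\overline a} H^2$ and $\MM(\overline{a_1}) = T_{\overline{a_1}} H^2$ by definition, so it suffices to prove $T_{\overline a} H^2 = T_{\overline{a_1}} H^2$.

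The core computation is the multiplicativity of co-analytic Toeplitz operators: for $\varphi, \psi \in H^\infty$ one has $T_{\overline{\varphi}}\, T_{\overline{\psi}} = T_{\overline{\varphi \psi}}$ (equivalently $T_{\overline{\varphi}} T_{\overline{\psi}} = T_{\overline{\varphi}\,\overline{\psi}}$, valid because $\overline{\psi}$ is anti-analytic so $T_{\overline{\psi}}$ is the adjoint of multiplication by $\psi$ and these compose correctly). Applying this with $a = u\, a_1$ gives
$$T_{\overline a} = T_{\overline{u}\,\overline{a_1}} = T_{\overline u}\, T_{\overline{a_1}},$$
hence $\MM(\overline a) = T_{\overline u}\, T_{\overline{a_1}} H^2 \subseteq T_{\overline{a_1}} H^2 = \MM(\overline{a_1})$. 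The reverse inclusion is symmetric: from $a_1 = u^{-1} a$ we get $T_{\overline{a_1}} = T_{\overline{u^{-1}}} T_{\overline a}$, so $\MM(\overline{a_1}) \subseteq \MM(\overline a)$. This yields equality of the two spaces as sets.

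Finally I would note the norm equivalence, which the statement implicitly requires for $\MM(\overline a)$ and $\MM(\overline{a_1})$ to be ``the same'' as Hilbert spaces: the range norm on $\MM(\overline a)$ is $\|T_{\overline a} f\|_{\MM(\overline a)} = \|(\ker T_{\overline a})^\perp\text{-part of } f\|_{H^2}$, and since $T_{\overline u}$ and $T_{\overline{u^{-1}}}$ are bounded operators on $H^2$ that are inverse to each other (again by multiplicativity, $T_{\overline u} T_{\overline{u^{-1}}} = T_{\overline{1}} = I$), the factorization $T_{\overline a} = T_{\overline u} T_{\overline{a_1}}$ shows the identity map between the two range spaces is bounded both ways; the closed graph theorem (as in the remark following Theorem~\ref{Thm1}) then gives equivalence of norms. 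The only point that needs a little care — and the main (minor) obstacle — is justifying the identity $T_{\overline\varphi} T_{\overline\psi} = T_{\overline{\varphi\psi}}$ for $\varphi,\psi \in H^\infty$; this is standard (one of the two Toeplitz symbols being anti-analytic makes the product of Toeplitz operators again Toeplitz), but it should be stated explicitly since the whole argument rests on it.
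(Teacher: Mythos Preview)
Your argument is correct and follows essentially the same route as the paper: invoke Smirnov's theorem to get $a/a_1$ invertible in $H^\infty$, then use the multiplicativity of co-analytic Toeplitz operators together with the invertibility of $T_{\overline{a/a_1}}$ on $H^2$. The paper's version is a touch more streamlined, writing $T_{\overline a}H^2 = T_{\overline{a_1}}T_{\overline{a/a_1}}H^2 = T_{\overline{a_1}}H^2$ in one line (invertibility gives $T_{\overline{a/a_1}}H^2 = H^2$ immediately), whereas your two-inclusion argument tacitly uses commutativity of co-analytic Toeplitz operators to justify $T_{\overline u}T_{\overline{a_1}}H^2 \subseteq T_{\overline{a_1}}H^2$; but this is a cosmetic difference, not a gap.
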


\begin{proof}
Again, by Smirnov's theorem, we know that $a/a_1$ and $a_1/a$ belong to $H^{\infty}$. Thus 
 $T_{a/a_1}$, and hence $T_{\overline{a/a_1}}$, are invertible operators on $H^2$. From here we get 
$$\MM(\overline{a})  = T_{\overline{a}} H^2
 = T_{\overline{a_1}} T_{\overline{a/a_1}} H^2
 =T_{\overline{a_1}} H^2
 = \MM(\overline{a_1}). \qedhere
$$ 
\end{proof}

\section{$\HH(b^r) = \HH(b)$ as sets}

We are now ready to prove Theorem \ref{Thm1}.

\begin{proof}[Proof of Theorem \ref{Thm1}]
The implication $(a) \implies (b)$ is trivial. 

To show $(b) \implies (c)$ note from \cite[I-10]{Sa} we have 
$$\HH(b^2) = \HH(b) + b \HH(b).$$
But since we are assuming that $\HH(b^2) = \HH(b)$ is follows that $b \HH(b) \subset \HH(b)$. 

For the implication $(c) \implies \eqref{Thm-CP}$, we use the fact that $b$ is non-extreme and \cite[VIII-1, VIII-7]{Sa} to see that $b$ being a multiplier of $\HH(b)$ is equivalent to $(a, b)$ being a corona pair.

To show that $\eqref{Thm-CP} \implies (a)$, we proceed as follows. By Lemma 
\ref{simCoP}
we know that 
since $(a, b)$ is a corona pair, then so is $(a_r, b^r)$. 
Thus from Proposition \ref{PropSa} we see that $\HH(b) = \MM(\overline{a})$ and $\HH(b^r) = \MM(\overline{a_r})$. 
As in the proof of Lemma \ref{simCoP} $a/a_r$ and $a_r/a$ belong to $H^{\infty}$ so, by
Proposition \ref{PropRngToe}, 
we get $\MM(\overline{a}) = \MM(\overline{a_r})$. Putting this all together we get the desired set equality $\HH(b^r) = \HH(b)$. 

For the implication $(c) \implies \eqref{Thm-b-inv}$, we use the fact that $b$ is extreme and \cite[VIII-1, VIII-5]{Sa} to see that $b$ being a multiplier of $\HH(b)$ is equivalent to $b$ being an invertible element of $H^{\infty}$. 

It remains to show $\eqref{Thm-b-inv} \implies (a)$. Assuming that $b$ is invertible in $H^\infty$, we use, once again, \cite[VIII-1]{Sa} to see that $\HH(b)=\HH(\bar b)$. But, since $b$ is invertible in $H^\infty$, then so is $b^r$ and we thus also have $\HH(b^r)=\HH(\overline {b^r})$. Remember that 
\[
\frac{1-|b|^2}{1-|b^r|^2}\asymp 1 
\]
and thus there are two constants $c_1,c_2>0$ such that 
\[
c_1(I-T_{\overline b}T_{b})\leqslant I-T_{\overline{b^r}}T_{b^r}\leqslant c_2(I-T_{\overline b}T_b).
\]
The Douglas factorization theorem implies that $\HH(\bar b)=\HH(\overline{b^r})$ which concludes the proof. 
\end{proof}

\begin{Remark}
In Theorem \ref{Thm1} we see from the above proofs that one can add the condition $\HH(b) = \HH(\overline{b})$ to the list of equivalent conditions. 
\end{Remark}

\section{The contents of $\HH(b)$}

We now can give the proof of Theorem \ref{MainThm}. Indeed, statement (1) of the theorem follows from Lemma \ref{a1bCP} and Theorem \ref{Thm1}. Let us consider statement (2).

In \cite{MR2473631} it was shown, for an outer function $b$, that if $\zeta \in \T$ and 
\begin{equation} \label{FM-AC}
\int_{\T} \frac{|\log|b(w)||}{|w - \zeta|^{2 n + 2}} dm(w) < \infty,
\end{equation}
then every function in $\HH(b)$, as well as its derivatives up to order $n$, has a finite non-tangential limit at $\zeta$.

Recalling the notation $v_{r,\lambda}^\ell$ for the $\ell$-th derivative in the variable $\overline{\lambda}$ of the reproducing kernel in $\HH(q^r )$, the results of \cite{MR2473631} also show that
 $v_{r,\zeta}^\ell\in \HH(q^r)$, $0\leqslant \ell\leqslant n$, and
$$f^{(\ell)}(\zeta) = \langle f, v^{\ell}_{r,\zeta}\rangle_{q^r},\quad f\in \HH(q^r),
\ 0\leqslant \ell\leqslant n.$$ 


Let us check condition \eqref{FM-AC} for our situation.
Since 
$q$ is rational its Pythagorean mate $a$ is also rational and can be written as 
\begin{equation}\label{a-prod-form}
a(z) = s(z) \prod_{j = 1}^{n} (z - \zeta_{j})^{m_j},
\end{equation}
 where $s$ is a rational function whose poles and zeros lie on the complement of $\D^{-}$. 
Pick $w = e^{i t}$ near one of the zeros $\zeta_{j} = e^{i \theta_j}$ of $a$. Then 
\begin{eqnarray*} 
|\log|q^r(e^{i t})|| &\asymp& |\log |q(e^{i t})|^2|
 = |\log (1 - |a(e^{i t})|^2)|\\
&\asymp&|a(e^{i t})|^2\asymp |e^{it} - e^{i \theta_j}|^{2 m_j}\\
\end{eqnarray*}
This means that for $t$ near $\theta_j$ we have 
$$\frac{|\log |q^r(e^{i t})||}{|e^{i t} - e^{i \theta_j}|^{2 (m_j - 1) + 2}} \asymp 1$$ and so, by \eqref{FM-AC},
every function in $\HH(b)$ as well as its derivatives up the order $m_j-1$
admits non-tangential limits at $\zeta_j$, and
$v^{\ell}_{r,\zeta_j} \in \HH(q^r)$ for all $0 \leqslant \ell \leqslant m_{j} - 1$.

The following interesting observation will be very useful in the proof of our
main theorem.

\begin{Lemma}\label{LemMult}
Suppose $a(z)=\prod_{j=1}^n(z-\zeta_j)^{m_j}$, where $\zeta_j\in\T$ and $m_j$ is
the corresponding multiplicity. If the non-tangential limits if an $f=T_{\overline{a}}g\in \MM(\overline{a})$, along with the non-tangential limits of its derivatives up to order $m_j-1$, vanish at every point $\zeta_j$, $j=1,\ldots,n$, then 
\begin{equation}\label{gsarezero}
 \widehat{g}(0)=\widehat{g}(1)=\cdots=\widehat{g}(N-1)=0,
\end{equation}
where $N=\sum_{j=1}^nm_j$.
\end{Lemma}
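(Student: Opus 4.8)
The plan is to exploit the fact that the non-tangential boundary values of $f = T_{\overline a} g$ at the points $\zeta_j$, together with those of its first $m_j - 1$ derivatives, are encoded by the Taylor behaviour of $g$ against the polynomial $a$. Recall first that since $a(z) = \prod_{j=1}^n (z-\zeta_j)^{m_j}$ is a polynomial, $\overline{a}$ on $\T$ can be rewritten using $\overline{\zeta} = 1/\zeta$ as $\overline{a(\zeta)} = \zeta^{-N} a^{*}(\zeta)$, where $a^{*}(z) = \prod_{j=1}^n (1 - \overline{\zeta_j} z)^{m_j} z^{?}$ is, up to unimodular constants, the polynomial $z^N \overline{a(1/\overline z)}$ — a polynomial of degree $N$ that is zero-free on $\D^{-}$ (its zeros are at the $\zeta_j$, which are on $\T$... careful: actually $a^*$ vanishes at $\zeta_j$ too since $|\zeta_j|=1$). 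The key computational point is that for $g \in H^2$,
$$
T_{\overline a} g = P_{+}(\overline a \, g) = P_{+}\big(\zeta^{-N} a^{*} g\big),
$$
and $a^{*} g \in H^2$, so $T_{\overline a} g$ is obtained from the analytic function $a^{*}(z) g(z) = \sum_{k \geqslant 0} c_k z^k$ by deleting the first $N$ Taylor coefficients and shifting: $(T_{\overline a} g)(z) = \sum_{k \geqslant 0} c_{k+N} z^k$. Hence $f = T_{\overline a} g$ extends analytically past any arc of $\T$ on which $g$ does (in particular near each $\zeta_j$, since $a$ is a polynomial and $g \in \MM(\overline a) \subset \HH(b)$ has the requisite smoothness there by the discussion preceding the lemma), and near $\zeta_j$ we have the identity
$$
z^{N} f(z) = a^{*}(z) g(z) - \sum_{k=0}^{N-1} c_k z^k .
$$

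Next I would read off the vanishing conditions. On a neighbourhood of $\zeta_j$ the function $a^{*}$ has a zero of order exactly $m_j$. The hypothesis says $f$ vanishes to order $\geqslant m_j$ at $\zeta_j$ (its value and first $m_j-1$ derivatives vanish). Therefore the left-hand side $z^N f(z)$ vanishes to order $\geqslant m_j$ at $\zeta_j$, and $a^{*}(z) g(z)$ vanishes to order $\geqslant m_j$ at $\zeta_j$ as well (since $g$ is analytic near $\zeta_j$ and $a^*$ contributes a zero of order $m_j$). Consequently the polynomial $P(z) := \sum_{k=0}^{N-1} c_k z^k$ vanishes to order $\geqslant m_j$ at each $\zeta_j$, $j = 1, \ldots, n$. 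But a polynomial of degree $\leqslant N-1$ that vanishes to total order $\sum_j m_j = N$ (counting multiplicity) must be identically zero. Hence $c_0 = c_1 = \cdots = c_{N-1} = 0$.

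Finally I convert this back to a statement about $g$. We have $c_k = \sum_{i=0}^{k} \widehat{a^{*}}(i)\, \widehat{g}(k-i)$ for $k = 0, \ldots, N-1$, i.e. the first $N$ Taylor coefficients of the product $a^{*} g$ vanish. Since $a^{*}$ is a polynomial with $\widehat{a^{*}}(0) = a^{*}(0) \neq 0$ (because $a^{*}$ is zero-free on $\D$), the lower-triangular Toeplitz-type system
$$
\sum_{i=0}^{k} \widehat{a^{*}}(i)\, \widehat{g}(k-i) = 0, \qquad k = 0, 1, \ldots, N-1,
$$
has invertible leading coefficient, so it forces $\widehat{g}(0) = \widehat{g}(1) = \cdots = \widehat{g}(N-1) = 0$, which is exactly \eqref{gsarezero}.

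The step I expect to require the most care is the first one: making rigorous the passage from the abstract Toeplitz formula $T_{\overline a} g = P_+(\overline a g)$ to the clean ``delete the first $N$ coefficients of $a^{*} g$'' description, and in particular justifying that $f = T_{\overline a}g$ genuinely extends analytically across $\T$ near each $\zeta_j$ so that ``$f$ and its first $m_j-1$ derivatives vanish at $\zeta_j$'' can legitimately be read as ``$f$ has a zero of order $\geqslant m_j$ at $\zeta_j$'' in the ordinary analytic sense. For the former, the identity $\overline{a(\zeta)} = c\,\zeta^{-N} a^{*}(\zeta)$ on $\T$ and the fact that $P_+$ kills exactly the negative Fourier modes do the job; for the latter I would invoke the smoothness of elements of $\HH(b)$ near $\zeta_j$ established via \eqref{FM-AC} in the paragraph immediately preceding the lemma (noting $\MM(\overline a) \subset \HH(b)$), together with the observation that $g = T_{a^*}^{-1}(\cdots)$ inherits analyticity near $\zeta_j$ from $f$ and $a^*$. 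Once these two points are pinned down, the rest is the elementary polynomial and linear-algebra argument sketched above.
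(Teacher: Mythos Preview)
Your overall strategy---rewriting $\overline{a(\zeta)}=c\,\zeta^{-N}a^{*}(\zeta)$ on $\T$, obtaining the identity $z^{N}f(z)=c\bigl(a^{*}(z)g(z)-P(z)\bigr)$ on $\D$ with $P\in\mathscr{P}_{N-1}$, and then forcing $P\equiv 0$---is sound and in fact different from the paper's route. The gap is in the step you yourself flag: you assert that $f$ (and then $g$) extend analytically across $\T$ near each $\zeta_j$, so that ``$f^{(\ell)}(\zeta_j)=0$ for $\ell\leqslant m_j-1$'' becomes ``$f$ has a zero of order $\geqslant m_j$.'' Neither is justified. Condition \eqref{FM-AC} yields only existence of non-tangential limits of $f,\ldots,f^{(m_j-1)}$ at $\zeta_j$, not analytic continuation; for a generic $g\in H^2$ (e.g.\ $g(z)=(1-z)^{-1/4}$ when $a(z)=z-1$) the function $a^{*}g$, and hence $f$, is not analytic at $\zeta_j$. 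Your backup plan, inverting $T_{a^{*}}$, fails too: $a^{*}=c'\,a$ vanishes at each $\zeta_j\in\T$, so $T_{a^{*}}$ is not invertible on $H^2$. (Also note a slip: it is $f\in\MM(\overline a)\subset\HH(b)$, not $g$.)

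The fix is to replace analyticity by a radial-limit argument applied directly to the identity $a^{*}g=c^{-1}z^{N}f+P$. Differentiate $\ell$ times, evaluate at $t\zeta_j$, and let $t\to 1^{-}$: the Leibniz expansion of $(a^{*}g)^{(\ell)}(t\zeta_j)$ is controlled by $|(a^{*})^{(p)}(t\zeta_j)|\lesssim(1-t)^{m_j-p}$ and the standard $H^2$ bound $|g^{(\ell-p)}(t\zeta_j)|\lesssim(1-t)^{-(\ell-p)-1/2}$, so each term is $O\bigl((1-t)^{m_j-\ell-1/2}\bigr)\to 0$ for $\ell\leqslant m_j-1$; the $(z^{N}f)^{(\ell)}$ term tends to $0$ by hypothesis. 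Hence $P^{(\ell)}(\zeta_j)=0$ for $\ell\leqslant m_j-1$, so $P\equiv 0$, and your triangular system finishes the job. By contrast, the paper works dually: it shows that $a\,k_{t\zeta_j,\ell}$ converges in $H^2$ to explicit polynomials $\varphi_{j,\ell}\in\mathscr{P}_{N-1}$, deduces $\langle g,\varphi_{j,\ell}\rangle_{H^2}=0$ from $(T_{\overline a}g)^{(\ell)}(\zeta_j)=0$, and then proves the $\varphi_{j,\ell}$ form a basis of $\mathscr{P}_{N-1}$. Your route, once repaired, avoids that basis verification and is arguably cleaner.
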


\begin{proof}[Proof of Lemma]
We prove \eqref{gsarezero} as follows. Consider the kernels 
$$k_{\lambda, \ell}(z) 
= c_{\ell} \frac{z^{\ell}}{(1 - \overline{\lambda} z)^{\ell + 1}},$$
where $c_\ell$ is adjusted so that
these are the reproducing kernels for $\ell$-th derivatives at point $\lambda\in\D$ in the Hardy space $H^2$, that is to say, 
$$f^{(\ell)}(\lambda) = \langle f, k_{\lambda, \ell}\rangle_{H^2} = \int_{\T} f(\zeta) \overline{k_{\lambda, \ell}(\zeta)} dm(\zeta), \qquad f \in H^2.$$ 
Observe, for $1\leqslant j\leqslant n$ and $0 \leqslant \ell \leqslant m_j - 1$, that 
\begin{align*}
a(z) k_{t\zeta_j, \ell}(z) & = c_{\ell} \frac{z^{\ell} (z- \zeta_j)^{m_j}}{(1 - t\overline{\zeta_j} z)^{\ell + 1}}
 \prod_{k\neq j}(z-\zeta_k)^{m_k}\\
& = c_{\ell} z^{\ell} (z - \zeta_j)^{m_j - (\ell + 1)} \left(\frac{z - \zeta_j}{1 - t\overline{\zeta_j} z}\right)^{\ell + 1}
  \prod_{k\neq j}(z-\zeta_k)^{m_k}.
\end{align*}
Writing
\[
 \frac{z - \zeta_j}{1 - t\overline{\zeta_j} z}=
 -\zeta_j\left(1-\overline{\zeta_j}z\frac{1-t}{1-t\overline{\zeta_i}z}\right),
\]
we see that $a(z) k_{t\zeta_j, \ell}(z)$ is uniformly bounded in $z\in\D$ and $t\in [0,1)$,
and moreover
\[
 \frac{z - \zeta_j}{1 - t\overline{\zeta_j} z} \to  -\overline{\zeta_j},\quad
t\to 1,
\]
for every $z$.
Thus, by the dominated convergence theorem, 
$$a k_{t, \ell} \to c z^{\ell} (z - \zeta_j)^{m_j - (\ell + 1)} 
  \prod_{k\neq j}(z-\zeta_k)^{m_k}$$ 
in the norm of $H^2$, where $c$ is some non zero constant depending on $\ell$ and $j$.

Choose any function $f=T_{\overline{a}}g\in\MM(\overline{a})$ 
with
$(T_{\overline{a}}g)^{(\ell)}(\zeta_j)=0$ for all  $1\leqslant j\leqslant n$, $0\leqslant \ell
\leqslant m_j-1$. Recall that $\MM(\overline{a}) \subset \HH(b)$ and so $f$, as well as all its derivatives up to order $m_j - 1$, admits non-tangential limits at $\zeta_j$ for all $1 \leqslant j \leqslant n$.
Then
\begin{align*}
0 & = (T_{\overline{a}} g)^{(\ell)}(\zeta_j)
 = \lim_{t \to 1^{-}} (T_{\overline{a}} g)^{(\ell)}(t\zeta_j)
 = \lim_{t\to 1^{-}} \langle T_{\overline{a}} g, k_{t\zeta_j, \ell}\rangle_{H^2}\\
 & = \lim_{t \to 1^{-}} \langle g, a k_{t\zeta_j, \ell}\rangle_{H^2}\\
& = \overline{c} \langle g, c z^{\ell} (z - \zeta_j)^{m_j - (\ell + 1)} 
  \prod_{k\neq j}(z-\zeta_k)^{m_k}\rangle_{H^2}.
\end{align*}
In order to prove the lemma, it suffices to show that the set 
\[
 \left\{
 \varphi_{j,\ell}(z):=z^{\ell}(z-\zeta_j)^{m_j-(\ell+1)}\prod_{k\neq j}(z-\zeta_k)^{m_k}\right\},
\]
where $j=1,\ldots,n$ and $\ell=0,\ldots,  m_j -1$,
is a basis for the space of polynomials of degree at most $N-1$. Clearly each $\varphi_{j,\ell}$ is a polynomial
of degree $N-1$ and there are $N-1$ functions $\varphi_{j,\ell}$. 
It remains to show that the elements of this family are linearly independent. 
Obviously, for fixed $1\leqslant r\leqslant n$ and $0\leqslant k\leqslant m_r-1$, we have 
\[
 \varphi_{j,\ell}^{(k)}(\zeta_r)=0,\quad j\neq r,\ 0\leqslant k\leqslant m_r-1,
\]
and
\begin{eqnarray}\label{triangle}
  \varphi_{r,\ell}^{(k)}(\zeta_r)=0,\quad  0\leqslant k\leqslant m_r-(\ell+2).
\end{eqnarray}
In particular, if $\sum_{j,\ell}\alpha_{j,\ell}\varphi_{j,\ell}=0$, then,
for fixed $r$ and $0\leqslant k\leqslant m_r-1$, $\sum_{j,\ell}\alpha_{j,\ell}\varphi_{j,\ell}^{(k)}(\zeta_r)=0$
which reduces to $\sum_{\ell}\alpha_{r,\ell}\varphi_{r,\ell}^{(k)}(\zeta_r)=0$. 
Writing $\varphi_{j,\ell}(z)=(z-\zeta_j)^{m_j-(\ell+1)}p_{j,\ell}(z)$, where $p_{j,\ell}$ does not vanish
at $\zeta_j$, Leibniz's formula gives
\begin{eqnarray*}
 \lefteqn{\varphi_{j,\ell}(z)^{m_j-(\ell+1)}(z)=\sum_{k=0}^{m_j-(\ell+1)} \binom{m_j-(\ell+1)}{k}}\\
 &&\qquad \times\frac{(m_j-(\ell+1))!}{(m_j-(\ell+1)-k)!}(z-\zeta_j)^{m_j-(\ell+1)-k}
  p_{j,\ell}^{(m_j-(\ell+1)-k)}(z).
\end{eqnarray*}
Evaluating this expression at $\zeta_j$ makes all terms of the sum vanish except for
$k=m_j-(\ell+1)$, and thus
\[
 \varphi_{j,\ell}(z)^{m_j-(\ell+1)}(\zeta_j)=(m_j-(\ell+1))!p_{j,\ell}(\zeta_j)\neq 0.
\]
This together with
\eqref{triangle} generates a triangular system of linear equations with non-zero diagonal entries.
Thus $\alpha_{r,\ell}=0$, $0\leqslant \ell \leqslant m_r-1$.
\end{proof}

We are now in a position to prove Theorem \ref{MainThm}.

Our arguments so far yield
\begin{equation}\label{ToShow1}
\HH(q^r) = \MM(\overline{a_r})
\end{equation}
and 
\begin{equation}\label{ToShow}
\MM(\overline{a_r})
 =\MM(\overline{a}) \supset \MM(a) +\bigvee \{v^{\ell}_{r,\zeta_j}: 1 \leqslant j \leqslant n, 0 \leqslant \ell \leqslant m_j - 1\}.
\end{equation}

First we show that the sum is orthogonal in the $\HH(q^r)$ inner product:
$$v^{\ell}_{r,\zeta_j} \perp \MM(a), \qquad 1\leqslant j\leqslant n,\,0 \leqslant \ell \leqslant m_j - 1.$$
Indeed, for each $f \in \HH(q^r)$ the radial limits $f^{(\ell)}(t \zeta_j)$ exist as $t \to 1^{-}$. Since 
$$f^{(\ell)}(t \zeta_j) = \langle f, v^{\ell}_{r, t\zeta_j}\rangle_{q^r},$$ we can apply the principle of uniform boundedness to see that $\|v_{r,t \zeta_j}^{\ell}\|_{q^r}$ is uniformly bounded as $t \to 1^{-}$. 
Since $v^{\ell}_{r, t\zeta_j}$ converges pointwise to $v^{\ell}_{r,\zeta_j}$ as $t \to 1^{-}$ we see that $v^{\ell}_{r, t \zeta_j}$ converges weakly to $v^{\ell}_{r,\zeta_j}$. Thus, since 
$v^{\ell}_{r,t\zeta_j}$ reproduces the $\ell$-th derivative of $\HH(q^r)$-functions at point $t\zeta_j$, for any $g\in H^2$, we have
\begin{align*}
\langle a g, v^{\ell}_{r,\zeta_j}\rangle_{q^r} 
& = \lim_{t \to 1^{-}} \langle a g, v^{\ell}_{r, t \zeta_j} \rangle_{q^r}
 = \lim_{t\to 1}(a g)^{(\ell)}(t \zeta_j)\\
& = \lim_{t\to 1} \sum_{p = 0}^{\ell} {\ell \choose p} a^{(p)}(t \zeta_j) g^{(\ell - p)}(t \zeta_j).
\end{align*}
Using the estimate
$$|a^{(p)}(t \zeta_j)| \lesssim (1 - t)^{m_j - p}$$ along with the following standard $H^2$ estimate on the growth of the derivative of an $H^2$ function 
$$|g^{(\ell - p)}(t \zeta_j)| \lesssim \frac{1}{(1 - t)^{(\ell - p) + 1/2}},$$ we see that 
$$|a^{(p)}(t \zeta_j) g^{(\ell - p)}(t \zeta_j)| \lesssim (1 - t)^{m_j - p - ((\ell - p) + 1/2)}.$$
But since 
$0 \leqslant \ell \leqslant m_j - 1$ we see that 
$$m_j - p - ((\ell - p) + 1/2) =m_j-\ell-\tfrac{1}{2}\geqslant \tfrac{1}{2}$$ and so 
$$\lim_{t \to 1^{-}} |a^{(p)}(t\zeta_j) g^{(\ell - p)}(t \zeta_j)| = 0.$$
Thus $\langle a g, v^{\ell}_{r,\zeta_j}\rangle_{q^r} =0$ and $v^{\ell}_{r,\zeta_j} \perp \MM(a)$ in $\HH(q^r)$, for all $0 \leqslant \ell \leqslant m_j - 1$. 

This upgrades \eqref{ToShow1} and \eqref{ToShow} to 
\begin{equation}\label{ToShow2}
\HH(q^r) = \MM(\overline{a}) \supset \MM(a) \oplus \bigvee  \{v^{\ell}_{r,\zeta_j}: 1 \leqslant j \leqslant n, 0 \leqslant \ell \leqslant m_j - 1\},
\end{equation}
and orthogonality is with respect to the norm in $\HH(q^r)$.

To show 
equality in \eqref{ToShow2}, our second step is to show that if
$f  
\in \MM(\overline{a})$ and $f\perp  v^{\ell}_{r, \zeta_j}$ 
for all $1 \leqslant j \leqslant n, 0 \leqslant \ell \leqslant m_j - 1$, then $f \in \MM(a)$. Since $\MM(\overline{a}) = T_{\overline{a}} H^2$ this is equivalent to prove that if $g \in H^2$ and 
$$0 = (T_{\overline{a}} g)^{(\ell)}(\zeta_j) = \lim_{t \to 1^{-}} (T_{\overline{a}} g)^{(\ell)}(t \zeta_j)$$ for all $1 \leqslant j \leqslant n, 0 \leqslant \ell \leqslant m_j - 1$ then $T_{\overline{a}} g \in \MM(a)$. To simplify matters a bit, let us recall the formula for $a$ from \eqref{a-prod-form}.  Since $s$ is a rational function with zeros and poles outside $\D^{-}$ then certainly the Toeplitz operators $T_{1/s}$ and $T_{\overline{1/s}}$ are invertible,
and so $\MM(\overline{a})=\MM(\overline{a/s})$. We can therefore make the simplifying assumption that
$$a(z) = \prod_{j = 1}^{n} (z - \zeta_{j})^{m_j}.$$

We will show that 
\begin{equation}\label{firststep}
(T_{\overline{a}} g)^{(\ell)}(\zeta_j) = 0, \; 1 \leqslant j \leqslant n,  0 \leqslant \ell \leqslant m - 1 \implies T_{\overline{a}} g \in a H^2.
\end{equation}
With $N = \sum_{j = 1}^{n} m_j$, one can verify the identify 
$$\overline{a(\zeta)} = \overline{\zeta}^Na(\zeta)\prod_{j=1}^n(-\overline{\zeta_j})^{m_j}, \qquad \zeta \in \T.$$ Thus
$$T_{\overline{a}} g = \prod_{j=1}^n(-\overline{\zeta_j})^{m_j}P_{+}(a \overline{\zeta}^N g).$$ 
By Lemma \ref{LemMult}, we have
$\widehat{g}(0) = \widehat{g}(1) = \cdots = \widehat{g}(N-1) = 0$,
which shows that $\overline{\zeta}^N g \in H^2$ and so 
$$T_{\overline{a}} g = \Big(\prod_{j=1}^n(-\overline{\zeta_j})^{m_j} \Big)
P_{+}(a \overline{\zeta}^N g) \in a H^2.$$
This completes the proof. 
{{\hfill\qedsymbol}}

\section{Examples}\label{Ex}

\begin{Example}\label{FirstEx}
Consider the function 
$$q(z) = \frac{1}{2} (1 + z)$$ and notice that $q$ is outer and $\|q\|_{\infty}=1$.
One can easily guess the Pythagorean mate for $q$ to be 
$a(z) = \frac{1}{2} (1 - z)$.
The function $a(z)$ has one zero of order $1$ at $z = 1$ and a computation reveals that 
$$v^{0}_{1,1}(z) = \frac{1 - \overline{q(1)} q(z)}{1 - z} = \frac{1}{2}.$$
In this case 
$$\HH(q) = (z - 1) H^2 \oplus \C.$$ Moreover, for any $r > 0$ we get $\HH(q^r) = \HH(q)$ and 
$$\HH(q^r) = (z - 1) H^2 \dotplus \C
 =(z-1)H^2\oplus \C\frac{1-\Big(\dfrac{ 1+z}{2}\Big)^r}{1-z}.$$
\end{Example}

For more general $q$ we need to review the proof of the F\'{e}jer-Riesz theorem which says that if 
$$w(e^{i \theta}) = \sum_{j = -n}^{n} c_j e^{i j \theta}$$ is a non-zero trigonometric polynomial which assumes non-negative values for all $\theta$, then there is an analytic polynomial 
$$p(z) = \sum_{j = 0}^{n} a_j z^j$$ 
so that $w(e^{i \theta}) = |p(e^{i \theta})|^2$.
Since the proof gives us the algorithm for computing $p$, we give a quick sketch. Indeed, as a function of the complex variable $z$, we see that if 
$$w(z) = \sum_{j = -n}^{n} c_j z^{j}$$ 
then 
$\overline{w(1/\overline{z})} = w(z)$, $z\in\T$. 
Assuming that $c_{-n} \not = 0$ we see that $s(z) = z^n w(z)$, $z\in\C$, is a polynomial of degree $2 n$ and the roots of $s$ occur in the pairs $\alpha, 1/\overline{\alpha}$ of equal multiplicity. It follows that 
$$w(z) = c \prod_{j = 1}^{n} (z - \alpha_j) (\frac{1}{z} - \overline{\alpha_j})$$ for some positive constant $c$ and where $\alpha_1, \ldots, \alpha_n$ satisfy $|\alpha_j| \geqslant 1$ for $1 \leqslant j \leqslant n$. The desired polynomial $p$ is 
$$p(z) = \sqrt{c} \prod_{j = 1}^{n} (z - \alpha_j).$$ 
Note that $p$ is zero free in $\D$ and we can multiply $p$ by a unimodular constant so that $p(0)>0$. 

Recall from the proof of Lemma \eqref{mate} that if $q = p_1/p_2$ is rational then the Pythagorean mate $a$ for $q$ is given by $a = p/p_2$,
where $p$ is the analytic polynomial (guaranteed by the F\'{e}jer-Riesz theorem) which satisfies 
$|p(e^{i \theta})|^2 = w(e^{i \theta}) = |p_2(e^{i \theta})|^2 - |p_1(e^{i \theta})|^2 \geqslant 0$, and $p$ is chosen to that $a(0) >0$.
\begin{Example}
Consider the function 
$$q(z) = \frac{1}{2} (1-z) (1 + z)$$ and note that $q \in \mathbf{b}(H^{\infty})$ and is outer. A computation shows that 
$$1 - |q(e^{i t})|^2 = \frac{1}{4} e^{-2 i t}+\frac{1}{4} e^{2 i t}+\frac{1}{2}.$$
Define 
$$w(z) = \frac{z^{-2}}{4} + \frac{z^2}{4} + \frac{1}{2}$$ 
and 
$$s(z) = z^2 w(z) = \frac{z^4}{4}+\frac{z^2}{2}+\frac{1}{4} = \frac{1}{4} (z - i)^2 (z + i)^2.$$
Notice how the zeros occur in pairs $i= 1/\overline{i}$ and $-i= 1/\overline{-i}$ 
as guaranteed by the above proof of the F\'{e}jer-Riesz theorem. 
Thus the Pythagorean mate $a$ for $q$ is of the form 
$a(z) = c (z - i) (z + i)$
for some $c$ adjusted so that $a(0) > 0$ and $1 - |q(e^{i \theta})|^2 = |a(e^{i \theta})|^2$. One can check by direct calculation that $c = 1/2$ works and so 
$a(z) = \tfrac{1}{2} (z - i) (z + i)$.
Of course the exact value of $c$ is not important for our calculations since we only need to identify the zeros of $a$ along with their multiplicities. 

The zeros of $a$ are at $z = i$ and $z = -i$ and each has order one. Thus 
$$\HH(q) = (z - i) (z + i) H^2 \oplus \bigvee \{v^{0}_{1,i},v^{0}_{1,-i}\},$$
where the kernels can be computed directly as 
$$v^{0}_{1,i}(z) = \frac{1}{2 i} (z + i), \qquad v^{0}_{1,-i}(z) = \frac{1}{2 i} (z - i).$$
Again, as in the previous example, $\HH(q^r)  = \HH(q)$ and so 
$$\HH(q^r) = (z - i) (z + i) H^2 \dotplus \bigvee \{z + i, z - i\}.$$
\end{Example}

\begin{Example}
Consider the  function 
$$q(z) = \frac{1}{4} (z+1)^2$$ and note that $q$ is outer and belongs to $\mathbf{b}(H^{\infty})$. Following our Fejer-Riesz computations as in the previous example, note that 
$$1 - |q(e^{i t})|^2 = -\frac{e^{-i t}}{4}-\frac{e^{i t}}{4}-\frac{1}{16} e^{-2 i t}-\frac{1}{16} e^{2
   i t}+\frac{5}{8}.$$
 Define 
 $$w(z) = -\frac{z^2}{16}-\frac{1}{16 z^2}-\frac{z}{4}-\frac{1}{4 z}+\frac{5}{8}$$ and 
 \begin{align*}
 s(z) & = z^2 w(z)\\
 & = -\frac{z^4}{16}-\frac{z^3}{4}+\frac{5 z^2}{8}-\frac{z}{4}-\frac{1}{16}\\
 & = -\frac{1}{16} (-1 + z)^2 (1 + 6 z + z^2).
 \end{align*}
 The zeros of $s$ are at 
 $$z = -1, z = -1, z = -3 - 2 \sqrt{2} \approx -5.82843, z = -3 + 2 \sqrt{2} \approx -0.171573.$$ Notice how these roots occur in the pairs $\alpha, 1/\overline{\alpha}$. The function $a$ is then 
 $a(z) = c (z - 1) (z + 3 + 2 \sqrt{2})$ for some appropriate constant $c$. There is one zero of $a$ at $z = 1$ with multiplicity one and so 
 $$\HH(q) = (z - 1) H^2 \oplus \C v^{0}_{1,1}(z).$$ The kernel can be computed to be 
 $$v^{0}_{1,1}(z) = \frac{z+3}{4}.$$ As in our previous examples, note that 
 $$\HH(q^r) = (z - 1) H^2 \dotplus \C (z + 3).$$
 
Observe that the $q$ from this example is the square of the $q$ from Example \ref{FirstEx} and thus the corresponding spaces should be the same. Indeed, a little algebra will show that 
$$(z - 1) H^2 \dotplus \C = (z - 1) H^2 \dotplus \C (z + 3).$$
\end{Example} 

\begin{Example}
Reversing the roles of $a$ and $q$ in the preceding example:
\[
 a(z)=\frac{1}{4}(z+1)^2,\qquad q(z)=c(z-1)(z+3+2\sqrt{2}), 
\]
with suitable $c$ 
so that $\|q\|_{\infty}=1$ (the maximum modulus on $\D^-$ being attained at $-1$, one
has $c=(4(1+\sqrt{2}))^{-1}$, and $q(-1)=-1$ corresponding to the normalization
$q(0)>0$), we obtain a function $a$ with double zero,
and so
\[
 \HH(q)=(z+1)^2H^2\oplus \bigvee\{v_{1,-1}^0,v_{1,-1}^1 \}
\]
where
\[
 v_{1,-1}^0(z)=\frac{1-\overline{q(-1)}q(z)}{1-\overline{(-1)}z}=\frac{1+q(z)}{1+z}.
\]
Using the facts that $q(-1)=-1$, $q'(z)=c(2+2\sqrt{2})$, and $q'(-1)=-1/2$, we obtain
\[
 v_{1,-1}^1(z)=\frac{\frac{1}{2}q(z)(1+z)+z(1+q(z))}{(1+z)^2}
 =\frac{1}{2}\frac{q(z)(1+3z)+2z}{(1+z)^2}.
\]
\end{Example}

\begin{Question}
So far we have computed the exact contents of $\HH(b)$ when $b$ outer, rational, and non-extreme. Can one compute the contents of $\HH(b)$ when $b$ is outer and extreme. For example if $b$ is the outer function corresponding to the outer function which satisfies $|b(e^{i \theta})| = 1$ for $0 \leqslant \theta \leqslant \pi$ and $|b(e^{i \theta})| = \tfrac{1}{2}$ for $\pi < \theta < 2 \pi$, can one describe the functions in $\HH(b)$?
\end{Question}


\bibliographystyle{plain}

\bibliography{references}

\end{document}